\newtheorem{thm}{Theorem}
\newtheorem{remark}{Remark}
\newtheorem{theorem}{Theorem}
\newtheorem{lemma}{Lemma}
\newtheorem{definition}[theorem]{Definition}
\newtheorem{fact}[thm]{Fact}
\author{TONG CHENG, ZHIHAN GAO, YUXIN MA, YUHAN NING,\\AND JIANGHAO XU }
\title{A  STUDY ON NICE OPEN COVERS IN CONSTRUCTIVE ANALYSIS}
\date{}
\begin{document}
\maketitle

\begin{abstract}
Mathematicians like Bishop~\cite{Bishop} and Markov~\cite{Markov} made an effort to develop constructive mathematics and extended many theorems in classical mathematical analysis. Heine Borel theorem tells us that a closed bounded subset of Euclidean space R is compact, but in constructive mathematics, Tseitin and Zaslavskii~\cite{Zaslavskii1, Zaslavskii2, TseitinZaslavskii} showed that the set of all constructive real numbers between $0$ and $1$ is not compact. We are going to show that when give certain restriction to the open cover on $[0,1]$, we can however always choose a finite sub-cover.
   
\end{abstract}

\leftline{\em \Small Keyword: constructive topology}

\section{Introduction}

In order to resolve these questions which are mentioned in the abstract, we will need some definitions and facts from the topology subject. The definitions and the facts are described below.

\begin{definition}
A subset A is {\it dense\/} in a topological space X if $\forall$  open set U$\subseteq$X $\exists$ a$\in$A s.t. a$\in$U 	
\end{definition}

\begin{definition}
A {\it open cover\/} of a set X is a collection of open sets whose union includes set X. An {\it open set\/} generalize the idea of an open interval.\end{definition}

\begin{definition}
 A constructive sequence of rational numbers CSRN is \emph{fundamental} if $\forall n \in \mathbb{N} \exists m\in \mathbb{N}$ s.t. $\forall i , j \geq m$ we have $| \alpha(i) - \alpha(j) | < 2^{-n}$. See~\cite{Kushner}
\end{definition}

\begin{definition}
 A {\it constructive real number\/} is formed by a computer algorithm $\alpha$ which is fundamental and a regulator which is a program $\beta$ given by computer algorithms that given $n$ produces $m$ as in the formula. See~\cite{Kushner}
\end{definition}

\begin{definition}
 A metric space is \emph{complete} if every Cauchy convergent sequence of points in it actually does converge.
\end{definition}
   
\begin{definition}
 A topological space $(X,\Omega)$ is called \emph{separable} if there exist a countable every where dense set $\{x_i\}_{i=1}^\infty$ of point of $X$.
\end{definition}
       
\begin{definition}
 A list $\{M,\rho\}$, where $M$ is some set of constructive objects and $\rho$ is an algorithm converting any pair of elements of $M$ into a constructive non-negative real number, is called a \emph{constructive metric space} if for any $X,Y,Z\subset M$ the following properties hold: 1) $\rho(X,X)=0$; 2) $\rho(X,Y)\le\rho(X,Z)+\rho(Y,Z)$. (The distance function $\rho$ should be symmetric i.e. $\rho(X,Y)=\rho(Y,X)$ and non-degenerate, i.e. of $\rho(X,Y)=0$, then $X=Y$. And the space is constructed from the initial countable dense set.) See~\cite{Kushner}
\end{definition}
       
\begin{definition}
 A subset $A$ of natural numbers is \emph{decidable} if there is an algorithm that given every natural number $n$ determines if $n$ is in $A$ or not. See~\cite{Kushner}
\end{definition}
       
\begin{definition}
 We say an open cover $\omega$ on interval $[0,1]$ is \emph{nice}, if $\forall CRN x \in [0,1]$, we can algorithmically choose a rational number $r \in \mathbb{Q}$, s.t. $\exists W \in \omega$, with $(x-r,x+r) \subset W$.
\end{definition}

\begin{definition}
 Suppose $X$ is a metric space with a metric $d$, and suppose $S$ is a subset of $X$. Let $\varepsilon$ be a positive real number. A subset $N\subset S$ is an $\varepsilon-net$ for $S$ if, for all $x\in S$, there is an $y\in N$, such that $d(x,y)<\varepsilon$.
\end{definition}

\begin{fact}There does exist an enumerable undecidable set.
\end{fact}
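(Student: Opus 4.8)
The plan is to exhibit the classical \emph{diagonal halting set} and verify the two required properties by the usual self-reference argument, phrased in the constructive vocabulary already introduced. First I would fix, once and for all, an effective enumeration $P_0, P_1, P_2, \dots$ of all computer algorithms that take a single natural number as input; such an enumeration is available because an algorithm is itself a finite constructive object and can be coded by a natural number. With this in hand, define
\[
K = \{\, n \in \mathbb{N} : P_n \text{ halts on input } n \,\}.
\]
The claim is that $K$ is enumerable but not decidable, which is exactly a witness for the Fact.

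To see that $K$ is enumerable, I would spell out the standard dovetailing procedure: at stage $s$, simulate each of $P_0, \dots, P_s$ on its own index for $s$ computation steps; whenever some $P_n$ on input $n$ is observed to halt, append $n$ to the output list (recording which indices have already been emitted so that nothing is listed twice). Every member of $K$ eventually shows up, and nothing outside $K$ ever does, so this is an algorithmic enumeration of $K$. This step is routine, and I expect no difficulty beyond bookkeeping.

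The heart of the argument is undecidability, and it proceeds by diagonalization. Suppose, for contradiction, that some algorithm $D$ decided membership in $K$. I would then construct an algorithm $Q$ which, on input $n$, first runs $D(n)$ and afterwards enters an infinite loop if $D$ reports $n \in K$, but halts immediately if $D$ reports $n \notin K$. Since $Q$ is a genuine algorithm, it occurs in our enumeration, say $Q = P_m$. Evaluating the defining equivalence of $K$ at $m$ gives $m \in K$ iff $P_m$ halts on $m$ iff $Q$ halts on $m$ iff $D$ reports $m \notin K$ iff $m \notin K$, which is absurd. Hence no such $D$ exists, and $K$ is undecidable.

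The only genuinely delicate point is the very first step: being explicit that the totality of all algorithms admits an algorithmic enumeration, and that ``simulate $P_n$ for $s$ steps'' is itself an effective operation — in effect, the availability of a universal machine or interpreter. In the constructive framework adopted in this paper this is part of the basic apparatus, so I would simply invoke it; a reader wanting complete formal rigor may substitute any fixed G\"odel numbering together with a step-counting universal function, after which everything above goes through verbatim.
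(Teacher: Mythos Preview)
Your argument is correct: the diagonal halting set $K$ is the standard witness, the dovetailing procedure cleanly gives enumerability, and the self-reference via $Q=P_m$ yields the contradiction for undecidability.

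The paper packages the same idea differently. Rather than constructing $K$ explicitly, it cites the existence of a partial computable function $f$ that admits no total computable extension, notes that its domain $F$ is enumerable, and then argues that if $F$ were decidable one could extend $f$ to a total computable $g$ by setting $g(x)=f(x)$ for $x\in F$ and $g(x)=0$ otherwise --- contradicting the choice of $f$. Underneath, the two proofs coincide: your $K$ is exactly the domain of the diagonal partial function $n\mapsto P_n(n)$, and your construction of $Q$ is precisely the argument that this function has no total computable extension. The paper's route is shorter because it delegates the diagonal step to a reference (Rogers); your route is more self-contained and exhibits the witnessing set and the diagonalization explicitly, which is arguably more in keeping with the constructive spirit of the surrounding text.
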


\begin{proof}
Take a computable function $f(x)$ that does not admit a computable everywhere defined extension. The domain $F$ of $f$ is enumerable, which has been proved.(See~\cite{Rogers}) If $F$ was decidable, then the function $g(x)$ defined by: $f(x)$ if $x\in F$; or 0 if $x\notin F$ will be a computable everywhere defined extension of the function f. (When computing $g(x)$, we should check if $x$ belongs to $F$ firstly, and if it does, then we compute $f(x)$.) See~\cite{Mendelson}.
\end{proof}
       
\begin{fact}
 Algorithms can be performed in steps.
\end{fact}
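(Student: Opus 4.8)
The plan is to reduce this statement to the step-by-step operational semantics that is already built into whatever formal notion of ``algorithm'' we have fixed --- Markov's normal algorithms as in Kushner~\cite{Kushner}, or equivalently Turing machines. In any such model a run of an algorithm $A$ on an input $x$ is, by definition, a sequence $c_0, c_1, c_2, \dots$ of \emph{configurations}: $c_0$ is determined by $x$ and the description of $A$, each $c_{k+1}$ is obtained from $c_k$ by a single application of $A$'s transition rule, and the run terminates at stage $t$ precisely when $c_t$ is a halting configuration, from which the output is then read off. Hence ``performing $A$ in steps'' means exhibiting an algorithm that computes, from the triple $(A, x, t)$, the configuration $c_t$ together with the Boolean value of the assertion ``$c_t$ is halting'', and the result when it is.

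First I would recall the exact definition of a configuration in the chosen model and note that a configuration is a finite constructive object --- a word, or a tuple made of tape contents, a head position, and a state --- so it may be coded as a natural number in a primitive way. Next I would verify that the one-step transition $c_k \mapsto c_{k+1}$ is itself computable: it is a finite case distinction governed by $A$'s rule table, hence an algorithm (in fact primitive recursive on the codes). Then the $t$-step simulation is just this one-step algorithm iterated $t$ times, and bounded iteration of an algorithm is again an algorithm --- a loop of predetermined length $t$ --- so no appeal to unbounded search is needed. Finally, whether a given $c_t$ is a halting configuration is a decidable property of its code, so the simulator can also signal termination and emit the output when it occurs.

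The point to stress is that there is essentially no obstacle here: the statement is a foundational bookkeeping fact, an instance of the Kleene normal form theorem, or, equivalently, of the existence of a universal algorithm equipped with a clock. The only care required is that it is meaningful only \emph{relative to a fixed model carrying an explicit operational semantics}; once such a model is fixed --- as it implicitly is throughout this paper --- the construction above goes through verbatim. This is precisely the fact that the later arguments will invoke whenever they \emph{dovetail} countably many computations: in round $n$ one advances each of the first $n$ algorithms by one additional step, so that every computation which halts is eventually seen to halt.
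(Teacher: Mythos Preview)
Your proposal is correct, but it differs from the paper in a simple and telling way: the paper offers no proof at all. Fact~2 is stated bare, immediately after Fact~1, and is treated as a primitive feature of the chosen computational model rather than something to be derived. Your write-up supplies exactly the justification one would give if pressed --- configurations are finite objects, the single-step transition is a decidable finite case analysis, bounded iteration is algorithmic, and halting is a decidable predicate on configurations --- and correctly identifies this as an instance of Kleene normal form / the existence of a clocked universal machine. That is the right content and the right level of care; the paper simply assumes the reader already knows it. Your closing remark about dovetailing is also apt, since Lemma~\ref{Lemma1} uses precisely the ``run $A$ on $n$ for $k$ seconds'' idiom that this fact licenses.
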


\section{Lemmas Used in the Main Proof of Theorem}

\begin{lemma}\label{Lemma1}

Let X be a complete separable metric space and Y be a set where detecting the equality of two elements is a decidable problem. Let $f$: X $\rightarrow$ Y be a function that is constant on a countable dense P $\supset$ X.\\Then f is a constant function.

\end{lemma}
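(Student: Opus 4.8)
The plan is to obtain the statement from the constructive continuity theorem of Kreisel--Lacombe--Shoenfield and Tseitin: every constructive (i.e.\ algorithmically given) function from a complete separable metric space into a constructive metric space is pointwise continuous. Relying on an effectivity principle is unavoidable here, since the classical version of the lemma is \emph{false} --- the indicator of $\mathbb{Q}\cap[0,1]$ is constant on a dense set yet not constant --- so all of the content has to come from constructivity, and no purely classical argument can help. The role of the hypothesis that equality on $Y$ be decidable is precisely to let us regard $Y$ as a constructive metric space, for instance with the discrete metric $\rho_Y(y,y')=0$ if $y=y'$ and $\rho_Y(y,y')=1$ otherwise, so that the continuity theorem applies to $f$; completeness and separability of $X$ are the remaining hypotheses that theorem needs.

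Granting continuity, the rest is short. First I would fix the constant value $c:=f(p)$ that $f$ takes on the dense set $P$ (choosing any $p\in P$), then take an arbitrary $x\in X$ and aim to show $f(x)=c$. Applying continuity of $f$ at $x$ with $\varepsilon=1/2$ produces a rational $\delta>0$ such that $d_X(x,x')<\delta$ implies $\rho_Y(f(x),f(x'))<1/2$, which for the discrete metric means $f(x)=f(x')$. By density of $P$ there is a point $p\in P$ with $d_X(x,p)<\delta$, whence $f(x)=f(p)=c$. As $x$ was arbitrary, $f$ is constant. (If one prefers to keep a general metric on $Y$: decide whether $f(x)=c$; if not, then $\rho_Y(f(x),c)>0$, so fix a rational $q$ with $0<q<\rho_Y(f(x),c)$, apply continuity at $x$ with $\varepsilon=q$ to get $\delta$, choose $p\in P$ with $d_X(x,p)<\delta$, and derive $\rho_Y(f(x),c)=\rho_Y(f(x),f(p))<q$, a contradiction; hence $f(x)=c$.)

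The density and triangle-inequality steps are routine; the whole difficulty sits in justifying the continuity of $f$. I would either cite the Kreisel--Lacombe--Shoenfield/Tseitin theorem in exactly the form used above --- constructive function, complete separable domain, constructive-metric codomain --- or, for a self-contained treatment, unwind its proof: by Fact~2 the computation producing $f(x)$ (or any prescribed approximation to it) halts after finitely many steps and therefore consults only a finite initial segment of a name for $x$, which together with the extensionality of $f$ yields a modulus of continuity at $x$. I expect most of the effort in a full write-up to go into stating this continuity principle correctly and checking that ``complete'', ``separable'' and ``decidable equality on $Y$'' are precisely the hypotheses it requires.
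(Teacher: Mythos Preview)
Your proposal is correct, but the paper takes a different, more self-contained route. Rather than invoking the Kreisel--Lacombe--Shoenfield/Tseitin continuity theorem as a black box, the paper proves Lemma~1 directly from the existence of an algorithm $A$ with undecidable domain (Fact~1). Given any $x\in X$, one builds a sequence $(x_n)\subset P$ converging to $x$, then interleaves it with the step-by-step execution of $A$ on input $n$: the resulting sequence converges to some $x_k\in P$ if $A(n)$ halts (at step $k$), and to $x$ if not. If $f$ were not constant one could pick $x$ with $f(x)\neq f(p)$ for $p\in P$; then deciding whether $f$ of this limit equals $f(p)$ or $f(x)$---which is possible since equality in $Y$ is decidable---would decide the domain of $A$, a contradiction.

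This is essentially the ``unwinding'' option you mention at the end of your proposal, specialised to the discrete codomain, so you correctly anticipated it as an alternative. The trade-off is the expected one: your citation-based argument is shorter and cleaner, while the paper's direct diagonalisation avoids importing the full continuity theorem and exposes the mechanism (the same test-sequence trick that underlies Tseitin's theorem itself). Since the paper does invoke Tseitin's theorem later, in Lemma~2, your approach would not introduce any new external dependency; it would simply move that appeal one lemma earlier.
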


\begin{proof} 
Let A be an algorithm with an undecidable domain set that runs in steps. We choose any x $\in$ X which is separable s.t. we construct a sequence x$_n$ $\rightarrow$ x\ where x$_n$ $\in$ P. Now we construct the new sequence using the algorithm A as follow: \\Run A on n for 1 second. If it does not terminate we write x$_1$. Then run another second (2 seconds for total). If A does not terminate we write x$_2$. If A terminates on the input n say in k seconds, we continue to write x$_k$ forever from that point.\\ Now we can deduce that if A terminates, the new sequence will converge to x$_k$ $($k is the time when A terminates on n$)$. That is, the limit of the new sequence will be x$_k$; if A never terminate, then the limit of the new sequence will be x.\\ Now we construct a new program g(x) which is exactly the constructive number that is the limit of the sequence program introduced before. Then we look at f(g(n)) where n is the input we fit into A. If f is not a constant function, then we can choose x $\in$ X s.t. f(x) $\neq$ f(p) where p $\in$ P.\\ So we can algorithmically decide if A terminates on n, which contradicts to our assumption that A has an undecidable domain set. So f is a constant function.
\end{proof}

\begin{lemma}\label{Lemma2}

Let $f: X \rightarrow Y,$ where $X$ is a constructive interval of real numbers and $Y$ as in Lemma~\ref{Lemma1}. Then $f$: X $\rightarrow$ Y is a constant function.
\end{lemma}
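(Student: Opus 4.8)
The plan is to read Lemma~\ref{Lemma2} as an immediate instance of Lemma~\ref{Lemma1}: it suffices to verify that a constructive interval $X$ of real numbers, equipped with the usual distance, satisfies the two structural hypotheses of Lemma~\ref{Lemma1}, namely that it is complete and that it is separable. Separability is the easy half: the rational numbers lying in $X$ form a countable set $P$, and they are dense in $X$, since by definition every constructive real in $X$ is the limit of a fundamental constructive sequence of rationals, and truncating that sequence (with due attention to the endpoints of $X$) produces, for each $x\in X$, a sequence in $P$ converging to $x$ with a computable regulator. Completeness is the half requiring a little care: one invokes the standard fact of constructive analysis that the constructive reals form a complete constructive metric space — a Cauchy sequence of constructive reals that comes with a regulator of convergence has a constructive real as its limit — and then observes that an interval is a closed subset of the line and hence itself complete.

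Once $X$ is exhibited as a complete separable metric space with countable dense subset $P=\mathbb{Q}\cap X$, Lemma~\ref{Lemma1} applies essentially verbatim and yields that $f$ is constant. Concretely, if $f$ failed to be constant we could pick $x\in X$ and $p\in P$ with $f(x)\neq f(p)$; running the step-by-step construction of Lemma~\ref{Lemma1} on a sequence of rationals $x_n\to x$ with $|x_n-x|<2^{-n}$ produces, for each input $n$ of an algorithm $A$ with undecidable domain, a constructive real $g(n)$ equal to $x$ if $A$ never halts on $n$ and equal to some $x_k\in P$ otherwise; then deciding whether $f(g(n))$ equals the common value of $f$ on $P$ — which we may do since equality in $Y$ is decidable — would decide whether $A$ halts on $n$, a contradiction.

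I expect the one genuine obstacle to be the matching of the hypothesis of Lemma~\ref{Lemma1} that $f$ is already constant on the dense set $P$. If that is taken as given (as in the intended applications), the argument above is complete. If instead one wants the bare assertion that \emph{every} $f\colon X\to Y$ is constant, the extra input is that every function from a complete separable metric space into a metric space is pointwise continuous — the recursive analogue of the Kreisel--Lacombe--Shoenfield/Tseitin theorem — so that $f$, being continuous into a space with decidable equality, is locally constant; combined with the connectedness of the interval $X$ this forces $f$ to be constant on $P$, after which Lemma~\ref{Lemma1} (or connectedness directly) finishes the proof. The delicate point is then precisely this passage from ``locally constant'' to ``constant on the rationals,'' i.e.\ the constructive treatment of connectedness of $X$, together with the bookkeeping needed at the endpoints so that every auxiliary sequence stays inside $X$.
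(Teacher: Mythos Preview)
Your overall diagnosis is right: Lemma~\ref{Lemma1} alone does not suffice, because it carries the extra hypothesis that $f$ is already constant on the dense set $P$, and Lemma~\ref{Lemma2} as stated has no such hypothesis. You correctly locate the missing work as the passage from ``locally constant'' (via Tseitin's continuity theorem into a discrete target) to ``constant on the rationals,'' and you correctly name Tseitin's theorem as the needed external input. But you then stop, explicitly flagging this step as ``the delicate point'' without carrying it out. As written, the proposal is a sound outline, not a complete proof.

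The paper fills precisely this gap, and it does so by a direct bisection rather than by invoking an abstract constructive connectedness statement. Assuming there exist rationals $p,q$ in the interval with $f(p)\neq f(q)$, one iterates: given $(p_{n-1},q_{n-1})$ with $f(p_{n-1})\neq f(q_{n-1})$, compute the midpoint $m=\tfrac{1}{2}(p_{n-1}+q_{n-1})$, use decidability of equality in $Y$ to test $f(m)$ against the endpoint values, and keep whichever half still has unequal $f$-values at its endpoints. The resulting rational sequences $(p_n)$ and $(q_n)$ are fundamental and converge to a common constructive real $x$. Tseitin's theorem is then invoked once: the composite into $Y$ with its discrete metric is continuous, so $f(p_n)\to f(x)$ and $f(q_n)\to f(x)$; in a discrete space this forces $f(p_n)=f(x)=f(q_n)$ for all large $n$, contradicting the construction. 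Hence $f$ is constant on the rationals, after which Lemma~\ref{Lemma1} finishes the argument.

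So your list of ingredients (Tseitin continuity, connectedness of the interval, then Lemma~\ref{Lemma1}) matches the paper's, but the paper replaces the unproved ``constructive connectedness'' step with this concrete bisection --- which is, in effect, the constructive connectedness argument tailored to the situation at hand. To turn your proposal into a proof you would need to supply exactly this (or an equivalent) argument.
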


\begin{proof}
Since the equality of two elements is a decidable question in Y, we can put a discrete metric space on the interval with the distance function $\rho$(x,y):

$$ \rho(x,y)=\left\{
\begin{array}{rcl}
0 & & {if\ x=y}\\
1 & & {if\ x\neq y}\\
\end{array} \right. $$

Notice that the set of all the rational number in the interval, denoted by Z, is dense. If $f$ is constant on $Z,$ using Lemma~\ref{Lemma1} we know that it is also constant on the whole interval. Now assume that the statement of Lemma~\ref{Lemma2} is wrong, we can learn that $\exists p,q \in Z,$ s.t.$ f(p) \neq f(q).$
\\Consider an algorithm $A(n)$ (n$\in$$\mathbb{N}$) for generating pairs of points: Firstly, A generates the pair $(p_{0},q_{0}).$ In the first second, A generates $(p_{1},q_{1})$ while:

$$ 
(p_{1},q_{1})
\begin{cases}=
(\frac{p_0+q_0}{2}, q_0 ) &  \text{ if }\ f(\frac{p_0+q_0}{2})\neq f(q_0)\\
(p_{0},\frac{p_0+q_0}{2}) & \text{ if } f(\frac{p_0+q_0}{2})\neq f(p_0)
\end{cases}  $$

Likely, for the case in the n seconds, $A$ generates $(p_{n},q_{n}$) while\\

$$ (p_n,q_n)=\begin{cases}
(\frac{p_{n-1}+q_{n-1}}{2},q_{n-1} ) & \text{ if } f(\frac{p_{n-1})+q_{n-1})}{2})\neq f(q_{n-1})
\\
(p_{n-1},\frac{p_{n-1}+q_{n-1}}{2}) &  \text{ if }f(\frac{p_{n-1}+q_{n-1}}{2})\neq f(p_{n-1})
\end{cases} $$

In this process we make sure that in each pair $f(p_{n})\neq f(q_{n}).$ We have actually also constructed two sequences $\{p_i\}$ and $\{q_i\}.$ \\Since $${ \lim_{n \to +\infty} q_{i}-p_{i}=0},$$ using calculus we can learn that these two sequences are both fundamental, and will converge to the same CRN denoted by x.
\\Take a sequence y$_n$ in Y. Since the equality relation of elements in Y is decidable, we have an algorithm B that maps Y to the discrete metric space Z of the same cardinality with $B(y) \neq B(y')$ for $y \neq y'.$ Now we look at the composition 
$B \circ f:Y \to Z.$ According to Tseitin's theorem~\cite{Tseitin1, Tseitin2} we learn that this composition is continuous, thus the sequences $f(p_n)$ and $f(q_n)$ will converge to the same element $f(x).$ Since we have a discrete metric on 
$Y, \exists N_p$ s.t. $\forall m,n > N_p, f(p_m)= f(q_n)= f(x),$ and such $N_q$ also exists for $\{q_i\}$.
\\Denote $N=\max (N_p, N_q).$ we can see that $\forall n>N, f(p_n)= f(x)= f(q_{n}),$ which contradicts to the process of the algorithm A. Thus we show that f is constant on Z. Using Lemma~\ref{Lemma1} we can learn that f is constant on the whole interval.
\end{proof}

\section{Main Theorem}

Tseitin and Zaslavskii~\cite{Zaslavskii1, Zaslavskii2, TseitinZaslavskii} showed that the set of all CRN between 0 and 1 is not compact. That is to say, we cannot always find a finite sub-cover from its open cover. (Note that this is very different from the situation in the inuitionistic logic where such an interval is compact, see Cederquist and Negri~\cite{CN}.)

However, when we give the open cover some restriction, we will see that such finite open cover can always be found.\\

\begin{thm}
if an open cover on the set of all the CRN between 0 and 1 is nice, we can always algorithmically find a finite sub-cover.
\end{thm}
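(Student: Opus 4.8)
The plan is to use the niceness hypothesis to produce a single \emph{uniform} radius $r_0$ that works at every CRN in $[0,1]$, and then cover $[0,1]$ by finitely many balls of that radius centred at rational points. The passage from a pointwise-chosen radius to a uniform one is exactly where Lemma~\ref{Lemma2} does the work, and everything after that is a standard finite-net argument made effective.

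First I would isolate, from the niceness algorithm, the function $g$ sending a CRN $x\in[0,1]$ to the rational radius $r$ that the algorithm outputs on input $x$; we may assume $r>0$, since $r=0$ makes the niceness condition vacuous. Its codomain $\mathbb{Q}$ is a set in which equality of elements is decidable, and $[0,1]$ is a constructive interval of real numbers, so $g$ is precisely a function of the kind treated in Lemma~\ref{Lemma2}. That lemma forces $g$ to be constant; evaluating it at any one CRN, say $x=1/2$, gives an explicit positive rational $r_0$ such that for \emph{every} CRN $x\in[0,1]$ there is some $W\in\omega$ with $(x-r_0,x+r_0)\subset W$.

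With $r_0$ in hand the rest is a finite-net argument. Pick $N=\lfloor 1/r_0\rfloor+1$, so that $1/N<r_0$, and put $a_i=i/N$ for $i=0,1,\dots,N$: rationals in $[0,1]$ whose consecutive gaps are $1/N<r_0$. Each $a_i$ is a CRN in $[0,1]$, so niceness applied at $a_i$ with radius $r_0$ yields $W_i\in\omega$ with $(a_i-r_0,a_i+r_0)\subset W_i$. I claim $\{W_0,\dots,W_{N-1}\}$ covers all CRN in $[0,1]$. Given such an $x$, choose a rational $q$ with $|x-q|<(r_0-1/N)/4$ and clamp it to a rational $q'\in[0,1]$, so that $|x-q'|<(r_0-1/N)/2$; since order comparisons between rationals are decidable we can find $i\in\{0,\dots,N-1\}$ with $|q'-a_i|\le 1/N$, and then $|x-a_i|\le|x-q'|+|q'-a_i|<(r_0-1/N)/2+1/N<r_0$ (using $1/N<r_0$), hence $x\in(a_i-r_0,a_i+r_0)\subset W_i$. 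It is the deliberate overlap of the balls — the net being finer than the uniform radius — that keeps this location step decidable, even though order comparisons with a general CRN are not. Every operation here is effective: $r_0$, the points $a_i$, and the sets $W_i$ are all computed, and the pieces are composed by running algorithms in steps.

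The one point I expect to genuinely need care is the application of Lemma~\ref{Lemma2} to $g$: for that lemma to apply, $g$ must be a function \emph{of the CRN} $x$ — the radius must depend only on $x$, not on the particular program-plus-regulator naming it. This is natural to require, since the niceness condition already speaks of CRN $x\in[0,1]$ and the admissibility of a radius $r$ depends only on the real $x$ (the interval $(x-r,x+r)$ being determined by $x$); with this reading the niceness algorithm computes a genuine function into $\mathbb{Q}$ and Lemma~\ref{Lemma2} applies verbatim. A secondary, more bookkeeping issue is how the witnesses $W_i\in\omega$ are named: this is immediate if the niceness algorithm returns a witness $W$ alongside $r$, and otherwise one needs $\omega$ presented as an effective enumeration for which the inclusion $(a_i-r_0,a_i+r_0)\subset W$ is semidecidable, so that a dovetailed search over $\omega$ turns up a suitable $W_i$.
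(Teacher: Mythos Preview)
Your proof is correct and follows essentially the same route as the paper: apply Lemma~\ref{Lemma2} to the niceness map $x\mapsto r$ (codomain $\mathbb{Q}$ with decidable equality) to obtain a single uniform radius, then extract a finite subcover from a finite $r_0$-net of $[0,1]$. Your version is in fact more careful than the paper's --- you explicitly construct the net, verify the covering with a decidable location step, and flag the extensionality of $g$ and the question of effectively naming the witnesses $W_i$, points the paper glosses over.
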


\begin{proof} Denote $[0,1]_R= \{ X-CRN | 0\leq x < 1\}$ where $X$ is the limit of the Cauchy sequence generated by programs and the regulator of the Cauchy sequence. Assume we call a program $E: X \rightarrow r$ (from the definition of a nice cover). From Lemma~\ref{Lemma2} we know that $E(x_1)= E(x_2), \forall x_1, x_2 \in [0,1]_R$.\\Since the open cover of $[0,1]_R$ is assumed to be nice, we can algorithmically choose $r$ s.t. the ball $B_r(x)$ with radius $r$ is contained in some elements of the open cover. Since $r$ is same for all $x$ and the number of $\epsilon$-net where $\epsilon=r$ is finite by definition, we can let the points of $\epsilon$-net be the centre of the balls $B_r(x).$ So we can get a finite number of 
$B_r(x).$\\ For each ball $B_r(x),$ there is possibly a infinite number of open set from the open cover $[0,1]_R$. So we can algorithimically choose one of these elements of open covers while a ball $B_r(x)$ is inside it. So for a nice open cover, We can always find a finite open sub-cover. 
\end{proof}
\begin{remark}
Of course the same fact remains true when instead of the closed constructive interval [0,1]$_R$ one considers any path connected, complete and separable constructive metric space.
\end{remark}

{\bf Acknowledgement:}
This paper was written as a result of the research project that the authors undertook at the CIS Online Education Company organized by the Torhea Education Group Inc. We are thankful to Torhea for organizing this program and to Vladimir Chernov for supervising the research project, who in turn is very grateful to Viktor Chernov for the multiple discussions about the problem and the research field in general.

\end{document}